\documentclass[12pt]{amsart}
\usepackage{enumerate}
\usepackage[margin=1in]{geometry}

\newtheorem{theorem}{Theorem}[section]
\newtheorem{lemma}[theorem]{Lemma}
\newtheorem{proposition}[theorem]{Proposition}

\theoremstyle{definition}

\theoremstyle{remark}

\newtheorem{conjecture}[theorem]{Conjecture}
\numberwithin{equation}{section}
\newcommand{\EE}{\mathbb{E}}
\DeclareMathOperator{\tr}{tr}
\DeclareMathOperator{\BBeta}{Beta}

\title{Hardy-type norms of matrices}

\author{Leonid V. Kovalev}
\address{215 Carnegie, Department of Mathematics, Syracuse University, Syracuse, NY 13244, USA \\
ORCID: 0000-0001-8002-7155}
\email{lvkovale@syr.edu}

\subjclass[2020]{Primary 15A60; Secondary 46B10, 52A21}

\keywords{Matrix norm, geometric mean, expected value, Hardy norm, duality}

\begin{document}

\begin{abstract}
We show that the function that assigns to each square matrix $A$ the geometric mean of $|Ax|$ over all unit vectors $x$ is a norm. The same holds for $L^p$ means with $0<p<1$. Some properties of these norms are proved, and others are conjectured. 
\end{abstract}

\maketitle

\section{Introduction}

Howe and Johnson~\cite{HoweJohnson} considered several ``expected-value norms'' of matrices, which are defined as the average stretch factor $|Ax|/|x|$ rather than the maximal one. 
Such norms are generally not submultiplicative, so are not true matrix norms. But they are clearly subadditive, as the arithmetic mean of convex functions is convex. 

When averaging multiplicative increments, such as percentage changes or $|Ax|/|x|$ above, it is  natural to use the geometric mean: this was forcefully argued in~\cite{FlemingWallace}, see also~\cite{Carvalho}.
Accordingly, we define the \textit{geometric-mean norm} of a real $n\times n$ matrix $A$ by 
\begin{equation}\label{H0-def-intro}
\|A\|_{H^0} = \exp\left(\int_{S^{n-1}} \log |Ax| \,d\mu(x)\right)    
\end{equation}
where $\mu$ is the normalized surface measure of the unit sphere $S^{n-1}$ and $|\cdot |$ is the Euclidean norm. Since the geometric mean does not preserve convexity in general, it is not obvious whether $\|A\|_{H^0}$ is indeed a norm. Theorem~\ref{thm-h0c} shows that it is. The fact that the Euclidean vector norm is used in~\eqref{H0-def-intro} is essential here. 

Between~\eqref{H0-def-intro} and the arithmetic-mean norms of Howe and Johnson lie the Hardy-type norms 
\begin{equation}\label{Hp-def-intro}
\|A\|_{H^p} = \left(\int_{S^{n-1}} |Ax|^p \,d\mu(x)\right)^{1/p},\quad 0<p<\infty.    
\end{equation}
Indeed, one obtains~\eqref{H0-def-intro} when $p\to 0$ and the arithmetic mean when $p=1$. The subadditivity of~\eqref{Hp-def-intro} is clear for $p\ge 1$. Theorem~\ref{thm-hp-norm} shows that subadditivity holds even when $0<p<1$, answering a question from~\cite[p.~13]{KovalevYang}.

One reason to study the Hardy norms of matrices is their connection with harmonic mappings in $\mathbb R^n$. Specifically, a version of the Schwarz lemma for harmonic mappings $F\colon B^n\to B^n$ on the unit ball $B^n$ describes the possible values of the derivative matrix $DF(0)$ in terms of the dual of the $H^1$ norm~\cite[Theorem 5.1]{KovalevYang}. This dual norm, which is difficult to evaluate directly, is remarkably close to the $H^4$ norm: in two dimensions, this was shown in~\cite{Brevig-etal, KovalevYang}. A reader who is not interested in the geometric mean or $p$-means with $p<1$ is encouraged to skip forward to \S\ref{sec:duality} and ponder the inequality 
$\frac{1}{n}\tr\left(A^{\mathsf T}B\right) 
\le \|A\|_{H^4} \|B\|_{H^1}$ ($A, B\in \mathbb R^{n\times n}$) which is conjectured to hold for $n>2$ even though it fails for $n=2$. 

\subsection*{Generative AI disclosure.} The author used ChatGPT by OpenAI and Claude by Anthropic during all stages of this work. The final version of the text was written by the author, who is responsible for its correctness. 

\section{Preliminaries}

Let $\mathbb R^n_+ = \{x\in \mathbb R^n \colon \forall k\ x_k > 0\}$ be the positive orthant. A function $\Phi$ is $1$-homogeneous on $\mathbb R^n_+$ if $\Phi(tx) = t \Phi(x)$ for all $t>0$ and $x\in \mathbb R^n_+$. The following ``folklore'' lemma will be repeatedly used. 

\begin{lemma}\label{lem-mixed-partials}
Suppose that $\Phi\colon \mathbb R^n_+\to\mathbb R$ is a $C^2$-smooth $1$-homogeneous function such that 
\begin{equation}\label{eq-lem-partials}
\frac{\partial^2 \Phi }{\partial t_i \partial t_j} \le 0,\quad 1\le i<j\le n, \quad t\in \mathbb R^n_+.
\end{equation}
Then $\Phi$ is convex. 
\end{lemma}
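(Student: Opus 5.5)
We will show that the Hessian $H=(\Phi_{ij})$ is positive semidefinite at every point $t\in\mathbb R^n_+$. Since $\mathbb R^n_+$ is convex and open, this gives convexity of $\Phi$.

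The first step uses homogeneity. Differentiating $\Phi(st)=s\Phi(t)$ shows that each first partial $\Phi_i$ is $0$-homogeneous. Euler's identity for $0$-homogeneous functions then gives
\[
\sum_{j=1}^n t_j \Phi_{ij}(t) = 0, \qquad i=1,\dots,n.
\]
So $Ht=0$, and each diagonal entry is determined by the off-diagonal ones:
\[
\Phi_{ii} = -\frac{1}{t_i}\sum_{j\ne i} t_j\Phi_{ij}.
\]
By hypothesis~\eqref{eq-lem-partials}, $\Phi_{ij}\le 0$ for $i\ne j$, and $t_j>0$. Hence $\Phi_{ii}\ge 0$.

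The second step computes the quadratic form. For $v\in\mathbb R^n$, substitute $v_i=t_iw_i$, i.e.\ $w_i=v_i/t_i$, which is legitimate since $t_i>0$. Using the row relations,
\[
v^{\mathsf T}Hv=\sum_{i,j}\Phi_{ij}t_it_jw_iw_j
=\sum_{i,j}\Phi_{ij}t_it_j\Bigl(w_iw_j-\tfrac12 w_i^2-\tfrac12 w_j^2\Bigr).
\]
The subtracted terms contribute nothing, because $\sum_j\Phi_{ij}t_j=0$ for each $i$, and by symmetry of $H$ the same holds with the roles of $i$ and $j$ swapped. The bracket equals $-\tfrac12(w_i-w_j)^2$, so
\[
v^{\mathsf T}Hv=-\tfrac12\sum_{i\ne j}\Phi_{ij}\,t_it_j\,(w_i-w_j)^2\ \ge 0.
\]
The diagonal terms drop out because $(w_i-w_i)^2=0$. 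Each remaining term is nonnegative since $\Phi_{ij}\le0$ and $t_it_j>0$. This is the standard Laplacian-type identity for a symmetric matrix with nonpositive off-diagonal entries and a positive null vector.

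An equivalent route avoids the explicit identity. The matrix $D^{-1}HD$, with $D=\mathrm{diag}(t)$, has zero row sums and nonpositive off-diagonal entries, so it is weakly diagonally dominant with nonnegative diagonal. Gershgorin's theorem then places its eigenvalues in the closed right half-plane, and these eigenvalues coincide with those of the symmetric matrix $H$. Either way $H\succeq0$.

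The main point needing care is the cancellation in the second step, which relies on the symmetry $\Phi_{ij}=\Phi_{ji}$; this holds because $\Phi$ is $C^2$.

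The final step passes from $H\succeq0$ to convexity. For $x,y\in\mathbb R^n_+$, the segment between them stays in $\mathbb R^n_+$. The function $g(s)=\Phi((1-s)x+sy)$ satisfies $g''(s)=(y-x)^{\mathsf T}H(y-x)\ge0$, so $g$ is convex on $[0,1]$. Hence $\Phi$ is convex.
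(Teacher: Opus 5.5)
Your proposal is correct and follows essentially the same route as the paper: the paper likewise derives $Ht=0$ from Euler's identity and uses $\xi^{\mathsf T}H\xi=-\sum_{i<j}H_{ij}(\xi_i t_j-\xi_j t_i)^2/(t_it_j)$, which is exactly your identity after substituting $w_i=\xi_i/t_i$. Your Gershgorin variant and the explicit passage from $H\succeq 0$ to convexity are valid additions to the same argument.
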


\begin{proof} Let $H$ be the Hessian matrix of $\Phi$. 
Differentiating Euler's identity 
$\Phi(t) = \sum_{i=1}^n t_i\, \partial \Phi/\partial t_i$ with respect to $t_j$ and simplifying, we obtain 
$\sum_{i=1}^n t_i\, H_{ij} = 0$. The algebraic identity 
\[
\xi^T H \xi = -\sum_{i<j} H_{ij}\frac{(\xi_i t_j-\xi_j t_i)^2}{t_it_j},\quad \xi\in \mathbb R^n
\]
demonstrates that $H$ is positive semidefinite. Thus $\Phi$ is convex.      
\end{proof}

The converse of Lemma~\ref{lem-mixed-partials} is false in general: a convex $1$-homogeneous function may have positive mixed second derivatives. However, it holds in two dimensions. 

\begin{lemma}\label{lem-mixed-partials-2d}
Suppose that $\Phi\colon \mathbb R^2_+\to\mathbb R$ is a $C^2$-smooth convex $1$-homogeneous function. Then
\begin{equation}\label{eq-lem-partials-2d}
\frac{\partial^2 \Phi }{\partial t_1 \partial t_2} \le 0\quad \text{on } \mathbb R^2_+.
\end{equation}
\end{lemma}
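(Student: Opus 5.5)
We will reuse the Euler-identity computation from the proof of Lemma~\ref{lem-mixed-partials}. Write $H$ for the Hessian of $\Phi$ at a point $t=(t_1,t_2)\in\mathbb R^2_+$. Differentiating Euler's identity $\Phi(t)=t_1\,\partial_1\Phi+t_2\,\partial_2\Phi$ with respect to $t_j$ gives $t_1H_{1j}+t_2H_{2j}=0$ for $j=1,2$. In two dimensions these two relations determine the diagonal entries in terms of the mixed one:
\[
H_{11}=-\frac{t_2}{t_1}H_{12},\qquad H_{22}=-\frac{t_1}{t_2}H_{12}.
\]

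Next we invoke convexity. Since $\Phi$ is $C^2$ and convex on the open convex set $\mathbb R^2_+$, its Hessian is positive semidefinite at every point, so in particular $H_{11}\ge 0$. Combined with the first relation above and $t_1,t_2>0$, this yields $H_{12}=-\frac{t_1}{t_2}H_{11}\le 0$, which is exactly~\eqref{eq-lem-partials-2d}. Equivalently, the identity from the proof of Lemma~\ref{lem-mixed-partials} reduces for $n=2$ to
\[
\xi^T H\xi=-H_{12}\,\frac{(\xi_1t_2-\xi_2t_1)^2}{t_1t_2},
\]
and choosing any $\xi$ not parallel to $t$ forces $H_{12}\le 0$.

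There is no real obstacle here. The only point needing care is the standard fact that a $C^2$ convex function on an open set has a positive semidefinite Hessian, which follows from the nonnegativity of the second derivative of $s\mapsto\Phi(t+s\xi)$ at $s=0$. The proof works because in dimension two the kernel direction $t$ of $H$ leaves only a one-dimensional complement, on which the sign of $H$ is governed by the single entry $H_{12}$. This is also why the converse fails for $n\ge 3$.
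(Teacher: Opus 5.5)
Your proof is correct and is essentially the paper's argument: differentiate Euler's identity to get $t_1H_{11}+t_2H_{21}=0$, then use $H_{11}\ge 0$ from convexity and $t_1,t_2>0$ to conclude $H_{21}\le 0$. Your second route, via the $n=2$ case of the quadratic-form identity from Lemma~\ref{lem-mixed-partials}, is the same observation in another form.
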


\begin{proof} As in the proof of Lemma~\ref{lem-mixed-partials}, we have 
$t_1H_{11} + t_2 H_{21} = 0$. Since $\Phi$ is convex, $H_{11}\ge 0$, which implies $H_{21} \le 0$.    
\end{proof}

\section{The geometric mean norm}

Given a real $n\times n$ matrix $A$, we write $\EE_x \log |Ax|$ for the integral of $\log |Ax|$ with respect to the normalized spherical measure. In other words, $x$ is a uniformly distributed random unit vector. 
It is easy to see that $\EE_x \log |Ax|$ is finite unless $A$ is the zero matrix, in which case it is $-\infty$. Recall that 
\begin{equation}\label{H0-def}
\|A\|_{H^0}  = \exp\left(\EE_x \log |Ax|\right).    
\end{equation}

\begin{theorem}\label{thm-h0c} For every $n\ge 2$, the geometric mean~\eqref{H0-def} is an orthogonally invariant norm on $\mathbb R^{n\times n}$. 
\end{theorem}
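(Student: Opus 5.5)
Homogeneity, positivity and orthogonal invariance are immediate. Since $|(cA)x| = |c||Ax|$, we get $\|cA\|_{H^0} = |c|\,\|A\|_{H^0}$. Positivity holds because $\EE_x \log|Ax|$ is finite for $A\ne 0$. For orthogonal $U,V$ we have $|UAVx| = |A(Vx)|$, and $\mu$ is invariant under $V$, so $\|UAV\|_{H^0}=\|A\|_{H^0}$. The whole difficulty is the triangle inequality. By homogeneity it is equivalent to convexity of $A\mapsto \|A\|_{H^0}$, and it suffices to show that this function is convex along every line. So I will show that $t\mapsto \|A+tB\|_{H^0}$ is convex, or equivalently that the sublevel set $\{\|A\|_{H^0}\le 1\}$ is convex.

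My plan is to reduce to singular values and then apply Lemma~\ref{lem-mixed-partials}. By orthogonal invariance, $\|A\|_{H^0}=\Phi(s_1,\dots,s_n)$, where $s_k$ are the singular values and
\[
\Phi(t)=\exp\Bigl(\EE_x \tfrac12\log\bigl(\textstyle\sum_k t_k^2x_k^2\bigr)\Bigr).
\]
This $\Phi$ is symmetric and $1$-homogeneous on $\mathbb R^n_+$. By the Lewis/von Neumann theorem on orthogonally invariant functions, a symmetric function of singular values that is convex (and, being even in each sign, absolutely symmetric) induces a convex matrix function. It is a gauge, which gives a unitarily invariant norm. So it suffices to show that $\Phi$ is convex on $\mathbb R^n_+$, and then extend by continuity to the closed orthant.

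To get convexity of $\Phi$, I will use Lemma~\ref{lem-mixed-partials} and check that $\partial^2\Phi/\partial t_i\partial t_j\le 0$ for $i\ne j$. Write $\Phi=e^{L}$ with $L=\EE \tfrac12\log Q$ and $Q=\sum t_k^2x_k^2$. Then
\[
\Phi_{ij}=\Phi\,(L_iL_j+L_{ij}),\qquad L_i=\EE\frac{t_ix_i^2}{Q},\qquad L_{ij}=-2\,\EE\frac{t_it_jx_i^2x_j^2}{Q^2}.
\]
The required inequality is therefore
\[
\EE\frac{x_i^2}{Q}\;\EE\frac{x_j^2}{Q}\le 2\,\EE\frac{x_i^2x_j^2}{Q^2}.
\]
This correlation-type inequality is the main obstacle. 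Cauchy--Schwarz gives the wrong direction, so the factor $2$ and the Euclidean structure must enter.

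To handle it, I would replace the uniform $x$ by a standard Gaussian vector $g$, using $x=g/|g|$. The quantities $x_i^2/Q$ are $0$-homogeneous in $g$, so the expectations are unchanged. Then I would use the integral representation $1/Q=\int_0^\infty e^{-sQ}\,ds$, which applies to the Gaussian versions of these $0$-homogeneous expressions. With it, each expectation becomes an explicit integral of products of one-dimensional Gaussian moments, of the form $\prod_k(1+2st_k^2)^{-1/2}$ with extra factors $(1+2st_i^2)^{-1}$. The inequality then reduces to a two-variable integral inequality over $(s,r)$. After symmetrizing in $s$ and $r$, I expect the difference to become an integrand of definite sign, and this is where the constant $2$ should be used up exactly.

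If this computation becomes unwieldy, I will fall back on a different route. I would first prove the case $n=2$ directly, where the inequality is an explicit integral over the circle; there Lemma~\ref{lem-mixed-partials-2d} shows the condition is also necessary. I would then reduce the general case to two active variables, by conditioning on the other coordinates and treating them as a fixed positive additive constant inside $Q$.
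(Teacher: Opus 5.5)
Your reduction is correct and is the paper's: orthogonal invariance, the symmetric function $\Phi$ of the singular values, Lemma~\ref{lem-mixed-partials}, and the target inequality $\EE\frac{x_i^2}{Q}\,\EE\frac{x_j^2}{Q}\le 2\,\EE\frac{x_i^2x_j^2}{Q^2}$, which is \eqref{eq-need-for-h0}.

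There is one small omission in the extension step. Von Neumann/Lewis needs a convex function on all of $\mathbb R^n$, and convexity on the closed orthant plus even extension is not enough for that. You also need $\Phi$ nondecreasing in each variable on $[0,\infty)^n$, so that $\Phi(ta+(1-t)b)\le\Phi(t|a|+(1-t)|b|)$. This is obvious here, but it must be used.

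The genuine gap is that neither of your routes proves the key inequality. The Gaussian/Laplace route stops at ``I expect the difference to become an integrand of definite sign'', and no mechanism is given. Write $\alpha_k=(1+2st_k^2)^{-1}$ and $P=\prod_k(1+2st_k^2)^{-1/2}$. The left side becomes a double integral $\iint \alpha_i(s)\alpha_j(s')P(s)P(s')\,ds\,ds'$, while the right side is a single integral $2\int_0^\infty s\,\alpha_i\alpha_jP\,ds$. For $n=2$ these two are \emph{equal*}: there $\Phi=(t_1+t_2)/2$ is linear, so $\Phi_{12}\equiv0$. Any symmetrization would therefore have to produce an integrand that vanishes identically when $n=2$, and nothing in the proposal shows this happens.

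Your fallback route is the paper's strategy, but it omits the one step that carries all the content. Relabel so that $i=n-1$, $j=n$, and condition on $y=(x_1,\dots,x_{n-2})$. Then $y$ is uniform on $B^{n-2}$ and $(x_{n-1},x_n)=r(\cos\theta,\sin\theta)$ with $r=\sqrt{1-|y|^2}$ and $\theta$ uniform. Set $c=c(y)=r^{-2}\sum_{k\le n-2}t_k^2y_k^2$ and $T=(t_{n-1}^2+c)\cos^2\theta+(t_n^2+c)\sin^2\theta$, so that $Q=r^2T$. Define $u=\EE_\theta\frac{\cos^2\theta}{T}$, $v=\EE_\theta\frac{\sin^2\theta}{T}$ and $w=\EE_\theta\frac{\cos^2\theta\sin^2\theta}{T^2}$. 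The three expectations in \eqref{eq-need-for-h0} are then $\EE_y u$, $\EE_y v$ and $\EE_y w$.

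By Lemma~\ref{lemma-trig-integral}, the two-dimensional case holds with \emph{equality} on every fiber: $u(y)v(y)=2w(y)$. So ``the $n=2$ case plus conditioning'' yields only $2\,\EE_y w=\EE_y(uv)$. What you need is $\EE_y u\cdot\EE_y v\le 2\,\EE_y w$, and a product of expectations does not pass through conditioning.

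The missing idea is the positive-correlation inequality $\EE_y u\,\EE_y v\le\EE_y(uv)$. It holds because $u$ and $v$ depend on $y$ only through the scalar $c(y)$ and are both nonincreasing in it. Chebyshev's association inequality then applies, which is how the paper handles the $H^p$ case. For $H^0$ the paper instead writes $u=\frac{1-h}{t_{n-1}^2-t_n^2}$ and $v=\frac{1/h-1}{t_{n-1}^2-t_n^2}$ with $h=\sqrt{(t_n^2+c)/(t_{n-1}^2+c)}$. The difference then reduces to $\EE_y h\,\EE_y(1/h)-1\ge0$ by Cauchy--Schwarz. Without this step the inequality is unproved for every $n\ge3$.
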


Although the above theorem can be obtained by letting $p\to 0$ in Theorem~\ref{thm-hp-norm}, the direct proof is shorter and more transparent. 

\begin{proof} It is easy to see that~\eqref{H0-def} is invariant under multiplication of $A$ by orthogonal matrices on either side. Therefore, it depends only on the singular values of $A$. 

First consider the case of a diagonal matrix $A$ with positive diagonal entries $a_1,\dots, a_n$. The function 
$\Phi(a):=\|A\|_{H^0}$ is evidently smooth and $1$-homogeneous on $\mathbb R^n_+$. In order to apply Lemma~\ref{lem-mixed-partials}, we need to show that the mixed second partials of $\Phi$ are nonpositive. 

Let $\sigma(A, x) = \sum_{k=1}^n a_k^2 x_k^2$ and $\varphi(a) = \log \Phi(a) = \frac{1}{2}\EE_x\log \sigma(A, x)$. 
The relevant partial derivatives of $\varphi$ are
\[
\frac{\partial \varphi}{\partial a_i}
= a_i\, \EE_x \frac{x_i^2}{\sigma(A, x)} \quad  \text{and} \quad
\frac{\partial^2 \varphi}{\partial a_i\partial a_j}
= - 2a_ia_j\, \EE_x \frac{x_i^2 x_j^2}{\sigma(A, x)^2}  \quad (i<j).
\]
The Hessian matrix of $\Phi$ is
$D^2 (e^\varphi) = e^{\varphi} \left(D^2\varphi + \nabla \varphi (\nabla \varphi)^T\right)$. Thus, in order to apply Lemma~\ref{lem-mixed-partials} we need to show 
\begin{equation}\label{eq-need-for-h0}
\left(\EE_x \frac{x_i^2}{\sigma(A, x)}\right) \, \left(\EE_x \frac{x_j^2}{\sigma(A, x)}\right) \le 2\, \EE_x \frac{x_i^2 x_j^2}{\sigma(A, x)^2},\quad i<j.  
\end{equation}

Assume~\eqref{eq-need-for-h0} for now. By Lemma~\ref{lem-mixed-partials}, $\Phi$ is a convex function on the positive orthant of $\mathbb R^n$. The formula for $\Phi$ extends continuously from the positive orthant to its closure $[0,\infty)^n$. Moreover, $\Phi$ is $1$-homogeneous and increasing in each variable. Define its extension to $\mathbb R^n$ by $\Phi(a) = \Phi(\tilde{a})$ where $\tilde a_k = |a_k|$, $k=1,\dots, n$. The monotonicity and convexity of $\Phi$ on $[0,\infty)^n$ imply that for all $a,b\in \mathbb R^n$ and all $0\le t\le 1$ we have 
\[
\Phi(ta+(1-t)b) \le \Phi\left(t \tilde a + (1-t)\tilde b\right)
\le t\Phi(a) + (1-t)\Phi(b).
\]
Thus $\Phi$ is convex on $\mathbb R^n$. It is also $1$-homogeneous and vanishes only at the origin, so it is a norm on $\mathbb R^n$. This norm is invariant under permutations and sign changes of the coordinates; in other words, it is a symmetric gauge function. By von Neumann's theorem (see, for example, \cite[Theorem IV.2.1]{Bhatia} or 
\cite[Theorem 7.4.7.2]{HornJohnson}), applying $\Phi$ to the vector of singular values defines an orthogonally invariant norm on $\mathbb R^{n\times n}$. By the singular value decomposition, this norm is precisely the quantity defined in~\eqref{H0-def}. The proof of Theorem~\ref{thm-h0c} is complete, modulo the inequality~\eqref{eq-need-for-h0}.
\end{proof}

In order to prove~\eqref{eq-need-for-h0},  we need a few integral identities. 

\begin{lemma}\label{lemma-trig-integral} For $\alpha, \beta>0$ let 
$D(\theta)= \alpha\cos^2\theta+\beta\sin^2\theta$. Then 
\begin{equation}\label{eq-trigint-0}
\frac{1}{2\pi}\int_0^{2\pi} \log D(\theta)\,d\theta = 2\log \frac{\sqrt{\alpha}+\sqrt{\beta}}{2}
\end{equation}
\begin{equation}\label{eq-trigint-1}
\frac{1}{2\pi}\int_0^{2\pi} \frac{\cos^2 \theta}{D(\theta)}\,d\theta = \frac{1}{\sqrt{\alpha}\,(\sqrt{\alpha}+\sqrt{\beta})} 
= \frac{1-\sqrt{\beta/\alpha}}{\alpha-\beta}
\end{equation}
\begin{equation}\label{eq-trigint-2}
\frac{1}{2\pi}\int_0^{2\pi} \frac{\cos^2 \theta \sin^2\theta }{D(\theta)^2}\,d\theta = \frac{1}{2\sqrt{\alpha \beta}\,(\sqrt{\alpha}+\sqrt{\beta})^2}
= \frac{\sqrt{\beta/\alpha} + \sqrt{\alpha/\beta} - 2}{2(\alpha-\beta)^2}
\end{equation}
where the last parts of~\eqref{eq-trigint-1}--\eqref{eq-trigint-2} hold when $\alpha\ne \beta$. 
\end{lemma}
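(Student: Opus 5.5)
The plan is to derive all three identities from one generating function, $F(\alpha,\beta) := \frac{1}{2\pi}\int_0^{2\pi}\log D(\theta)\,d\theta$, by differentiating under the integral sign. The parameters $\alpha,\beta$ range over $(0,\infty)$, where $D$ is bounded below by $\min(\alpha,\beta)>0$, so this differentiation is justified.

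\textbf{Step 1: prove~\eqref{eq-trigint-0}.} Factor $D$ as
\[
D(\theta)=|\sqrt{\alpha}\cos\theta+i\sqrt{\beta}\sin\theta|^2 .
\]
Writing $\cos\theta,\sin\theta$ in terms of $z=e^{i\theta}$ gives
\[
\sqrt{\alpha}\cos\theta+i\sqrt{\beta}\sin\theta=\tfrac12\left((\sqrt\alpha+\sqrt\beta)z+(\sqrt\alpha-\sqrt\beta)\bar z\right).
\]
Its modulus equals $\tfrac12\,|(\sqrt\alpha+\sqrt\beta)+(\sqrt\alpha-\sqrt\beta)z^{-2}|$, hence equals $\tfrac12\,|(\sqrt\alpha+\sqrt\beta)+(\sqrt\alpha-\sqrt\beta)w|$ with $w=\bar z^{2}$ on the unit circle. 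The function $w\mapsto (\sqrt\alpha+\sqrt\beta)+(\sqrt\alpha-\sqrt\beta)w$ has no zeros in the closed unit disk, since $|\sqrt\alpha-\sqrt\beta|<\sqrt\alpha+\sqrt\beta$. Therefore $\log$ of its modulus is harmonic there, and by the mean value property the average is $\log(\sqrt\alpha+\sqrt\beta)$. (The substitution $w=\bar z^2$ covers the circle twice, which does not change the mean.) Doubling for the square and subtracting $2\log 2$ gives~\eqref{eq-trigint-0}.

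\textbf{Step 2: prove~\eqref{eq-trigint-1}.} Differentiate in $\alpha$:
\[
\partial_\alpha F=\frac1{2\pi}\int_0^{2\pi}\frac{\cos^2\theta}{D}\,d\theta,
\qquad
\partial_\alpha\, 2\log\frac{\sqrt\alpha+\sqrt\beta}{2}=\frac{1}{\sqrt\alpha(\sqrt\alpha+\sqrt\beta)}.
\]
For the alternative form, multiply numerator and denominator by $\sqrt\alpha-\sqrt\beta$:
\[
\frac{\sqrt\alpha-\sqrt\beta}{\sqrt\alpha(\alpha-\beta)}=\frac{1-\sqrt{\beta/\alpha}}{\alpha-\beta}.
\]

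\textbf{Step 3: prove~\eqref{eq-trigint-2}.} Differentiate~\eqref{eq-trigint-1} in $\beta$. On the left this gives
\[
-\frac1{2\pi}\int_0^{2\pi}\frac{\cos^2\theta\sin^2\theta}{D^2}\,d\theta .
\]
On the right,
\[
\partial_\beta\bigl[\alpha^{-1/2}(\sqrt\alpha+\sqrt\beta)^{-1}\bigr]=-\alpha^{-1/2}(\sqrt\alpha+\sqrt\beta)^{-2}\cdot\tfrac{1}{2\sqrt\beta},
\]
which matches after the sign change. For the last expression, multiply numerator and denominator by $(\sqrt\alpha-\sqrt\beta)^2$. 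This gives
\[
\frac{(\sqrt\alpha-\sqrt\beta)^2}{2\sqrt{\alpha\beta}(\alpha-\beta)^2},
\]
and expanding $(\sqrt\alpha-\sqrt\beta)^2/\sqrt{\alpha\beta}=\sqrt{\alpha/\beta}+\sqrt{\beta/\alpha}-2$ yields the stated formula.

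The only substantive step is Step 1, specifically getting the factorization right and checking that no zero lies inside the disk. Steps 2 and 3 are routine differentiation and algebra. If the harmonic-function argument proved awkward, the fallback is to verify $\partial_\alpha F$ directly by the standard integral
\[
\frac1{2\pi}\int_0^{2\pi}\frac{d\theta}{\alpha\cos^2\theta+\beta\sin^2\theta}=\frac{1}{\sqrt{\alpha\beta}},
\]
which follows from the substitution $t=\tan\theta$. One would then integrate in $\alpha$, using the normalization $F(\beta,\beta)=\log\beta$ to fix the constant.
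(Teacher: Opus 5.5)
Your proposal is correct and follows essentially the same route as the paper: the same factorization $D(\theta)=\frac14\left|(\sqrt\alpha+\sqrt\beta)e^{i\theta}+(\sqrt\alpha-\sqrt\beta)e^{-i\theta}\right|^2$, the mean value property for the harmonic function $\log\left|(\sqrt\alpha+\sqrt\beta)+(\sqrt\alpha-\sqrt\beta)w\right|$ (which has no zeros in the closed disk), then differentiation in $\alpha$ and in $\beta$. Your explicit verification of the alternative closed forms and your justification for differentiating under the integral sign supply details the paper leaves to the reader.
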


\begin{proof} Observe that 
$
D(\theta) = \frac{1}{4} \left|(\sqrt{\alpha}+\sqrt{\beta})e^{i\theta} + 
(\sqrt{\alpha}-\sqrt{\beta}) e^{-i\theta}\right|^2
$, hence 
\begin{equation}\label{eq-log-aux}
\log D(\theta) = 2\log \frac{\sqrt{\alpha}+\sqrt{\beta}}{2} 
+ 2\log \left|1 + \frac{\sqrt{\alpha}-\sqrt{\beta}}{\sqrt{\alpha}+\sqrt{\beta}} e^{-2i\theta}\right|.
\end{equation}
The second term on the right in~\eqref{eq-log-aux} has zero mean because the function $z\mapsto \log |1+cz|$ is harmonic on the unit disk when $|c|<1$. This proves~\eqref{eq-trigint-0}. The identity~\eqref{eq-trigint-1} is obtained by differentiating~\eqref{eq-trigint-0} with respect to $\alpha$, and~\eqref{eq-trigint-2} follows by additional differentiation in $\beta$.     
\end{proof}

\begin{proof}[Proof of~\eqref{eq-need-for-h0}] By relabeling the indices, we can take $i=n-1$ and $j=n$, reducing~\eqref{eq-need-for-h0} to 
\begin{equation}\label{eq-moments1b}
\left(\EE_x \frac{x_{n-1}^2}{\sigma(A, x)} \right) 
\left(\EE_x \frac{x_n^2}{\sigma(A, x)} \right)  \le 
2 \, \EE_x \frac{x_{n-1}^2 x_n^2}{\sigma(A, x)^2} .
\end{equation}    
Since both sides of~\eqref{eq-moments1b} are  continuous with respect to $\{a_k\}$, it suffices to consider the case $a_{n-1}\ne a_n$. The orthogonal projection $y=P(x)=(x_1,\dots, x_{n-2})\in \mathbb R^{n-2}$ pushes the normalized surface measure on $S^{n-1}$ to the normalized Lebesgue measure on the $(n-2)$-dimensional ball $B^{n-2}$. Thus, the average of an integrable function $f\colon S^{n-1}\to \mathbb R$ can be computed as 
\begin{equation}\label{eq-moments2}
\EE_x f = \EE_y \left(\frac{1}{2\pi} \int_0^{2\pi} f(y_1,\dots, y_{n-2}, r\cos \theta, r\sin\theta)\,d\theta \right) 
\end{equation}
where $y$ is uniformly distributed on $B^{n-2}$ and 
$r=\sqrt{1-|y|^2}$. For each of the three terms in~\eqref{eq-moments1b} the corresponding inner integral in~\eqref{eq-moments2} can be computed with the help of Lemma~\ref{lemma-trig-integral}, as follows. Let  
$g(y) = r^{-2} \sum_{k=1}^{n-2} a_k^2 y_k^2$ 
and rewrite $\sigma(A, x)$ as 
\begin{equation}\label{eq-rewrite-sigma}
\sigma(A, x) = r^2 \left( (a_{n-1}^2 + g(y)) \cos^2 \theta + 
(a_{n}^2 +g(y)) \sin^2 \theta \right). 
\end{equation}
Plugging~\eqref{eq-rewrite-sigma} into each of the three denominators in~\eqref{eq-moments1b}, we obtain expressions to which  Lemma~\ref{lemma-trig-integral} applies. Namely,~\eqref{eq-trigint-1} and~\eqref{eq-moments2} yield
\begin{equation}\label{eq-moments3}
\EE_x \frac{x_{n-1}^2}{\sigma(A, x)}  
= \EE_x \frac{\cos^2\theta }{(a_{n-1}^2 + g(y)) \cos^2 \theta + 
(a_{n}^2 +g(y)) \sin^2 \theta}    
= \frac{1 - \EE_y h(y)}{a_{n-1}^2-a_{n}^2}
\end{equation}
where 
\[ h(y)=\sqrt{\frac{a_n^2 + g(y)}{a_{n-1}^2 + g(y)}}.\]
Exchanging the roles of $x_{n-1}$ and $x_n$, we find that 
\begin{equation}\label{eq-moments4}
\EE_x \frac{x_{n}^2}{\sigma(A, x)}    
= \frac{\EE_y (1/h(y)) - 1}{a_{n-1}^2-a_{n}^2}.
\end{equation}
In a similar way,~\eqref{eq-trigint-2} yields 
\begin{equation}\label{eq-moments5}
2 \, \EE_x \frac{x_{n-1}^2 x_n^2}{\sigma(A, x)^2}  = 
\frac{\EE_y h(y) 
+  \EE_y (1/h(y))  - 2}{(a_{n-1}^2-a_{n}^2)^2}. 
\end{equation}
Combining~\eqref{eq-moments3}--\eqref{eq-moments5} we obtain 
\begin{equation}\label{eq-moments6}
2 \, \EE_x \frac{x_{n-1}^2 x_n^2}{\sigma(A, x)^2} - \left(\EE_x \frac{x_{n-1}^2}{\sigma(A, x)} \right) 
\left(\EE_x \frac{x_n^2}{\sigma(A, x)} \right)
= \frac{\EE_y h(y) \cdot \EE_y (1/h(y)) -1}{(a_{n-1}^2-a_{n}^2)^2}   \ge 0
\end{equation}
where the last step is the Cauchy-Schwarz inequality. This completes the proof of~\eqref{eq-moments1b}.    
\end{proof}


\section{Comparison of the geometric mean and other matrix norms} 

In dimension $n=2$ the geometric-mean norm coincides with the normalized trace norm, i.e., the arithmetic mean of singular values. This can be inferred from~\eqref{eq-trigint-0} and is a special case of~\eqref{eq-trace-comparison} below. 

For $n>2$, the geometric-mean norm can rarely be obtained in a closed form. However, we have such a form for orthogonal projection matrices. 

\begin{proposition}\label{prop-projection-H0}
Let $1\le k\le n-1$, and let $P_k$ be the matrix of an orthogonal projection of rank $k$ in $\mathbb R^n$. Then
\begin{equation}\label{eq-proj-H0}
\|P_k\|_{H^0}
= \exp\left(\frac{\psi(k/2)-\psi(n/2)}{2}\right)
\end{equation}
where $\psi=\Gamma'/\Gamma$ is the digamma function.
\end{proposition}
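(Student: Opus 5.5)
By orthogonal invariance (Theorem~\ref{thm-h0c}), we may take $P_k$ to be the coordinate projection onto the first $k$ coordinates. Then $|P_k x|^2 = x_1^2+\dots+x_k^2$, and
\[
\log \|P_k\|_{H^0} = \tfrac12\,\EE_x \log\bigl(x_1^2+\dots+x_k^2\bigr).
\]
For $x$ uniform on $S^{n-1}$, the random variable $T=x_1^2+\dots+x_k^2$ has the $\BBeta(k/2,(n-k)/2)$ distribution. To see this, write $x = g/|g|$ with $g$ a standard Gaussian vector in $\mathbb R^n$. Then $T = U/(U+V)$, where $U=\sum_{i\le k} g_i^2$ and $V=\sum_{i>k} g_i^2$ are independent $\chi^2$ variables with $k$ and $n-k$ degrees of freedom, i.e.\ Gamma variables with shapes $k/2$ and $(n-k)/2$. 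The standard fact that $U/(U+V)$ is Beta distributed finishes this step.

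It then remains to compute $\EE\log T$ for $T\sim \BBeta(a,b)$ with $a=k/2$ and $b=(n-k)/2$. Differentiate the identity
\[
\int_0^1 t^{a-1}(1-t)^{b-1}\,dt = \frac{\Gamma(a)\Gamma(b)}{\Gamma(a+b)}
\]
with respect to $a$ and divide by the right-hand side. This gives $\EE \log T = \psi(a)-\psi(a+b) = \psi(k/2)-\psi(n/2)$, and halving and exponentiating yields~\eqref{eq-proj-H0}.

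An alternative that avoids Beta distributions uses the Gaussian representation directly: $\EE\log T = \EE\log U - \EE\log(U+V)$, where $U+V\sim\chi^2_n$. This is legitimate because $|g|$ is independent of $g/|g|$, so $\EE\log U = \EE\log T + \EE\log|g|^2$. The needed ingredient is then $\EE\log \chi^2_m = \psi(m/2)+\log 2$, obtained from $\frac{d}{ds}\Gamma(s)$ evaluated at $s=m/2$. The $\log 2$ terms cancel. I expect no real obstacle here. The only points needing care are justifying the distributional identity and the differentiation under the integral sign, which is routine by dominated convergence since $\log t$ is integrable against $t^{a-1}$ for $a>0$.
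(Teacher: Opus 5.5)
Your proof is correct and follows the paper's argument essentially verbatim. Like the paper, you reduce to a coordinate projection, write $x=g/|g|$ so that $|P_kx|^2=U/(U+V)\sim\BBeta(k/2,(n-k)/2)$, and use the logarithmic moment $\psi(k/2)-\psi(n/2)$ of that Beta distribution. The paper simply cites this moment, whereas you derive it by differentiating the Beta integral in $a$. Your chi-square alternative is also valid; in fact $\log T=\log U-\log(U+V)$ holds pointwise, so you do not need the independence of $|g|$ and $g/|g|$ there, only that $g/|g|$ is uniform on $S^{n-1}$.
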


\begin{proof}  We may assume that $P_k$ is the projection onto the first $k$ coordinates. Let $g=(g_1,\dots,g_n)$ be the standard Gaussian on $\mathbb R^n$, so that $x=g/|g|$ is uniformly distributed on $S^{n-1}$. Then 
\[
|P_k x|^2=\frac{g_1^2+\dots+g_k^2}{g_1^2+\dots+g_n^2}=\frac{G_1}{G_1+G_2}
\]
where $G_1$ and $G_2$ are $\chi^2$-variables with $k$ and $(n-k)$ degrees of freedom, respectively. Since $G_1$ and $G_2$ are independent, the ratio $G_1/(G_1+G_2)$ follows the distribution $\BBeta(k/2, (n-k)/2)$, see~\cite[\S25.2]{JKB2}. The logarithmic moment of this Beta distribution is $\psi(k/2)-\psi(n/2)$, see~\cite[\S25.3]{JKB2}. Thus, $\EE_x\log (|P_k x|^2) = \psi(k/2)-\psi(n/2)$ which yields~\eqref{eq-proj-H0}.
\end{proof}

In practice, it is easier to evaluate~\eqref{eq-proj-H0} in terms of harmonic numbers 
\[
H_\alpha := \int_0^1 \frac{1-x^\alpha}{1-x}\,dx,\quad 
\alpha> -1,
\]
using the relation $\psi(\alpha+1) = H_\alpha - \gamma$, where $\gamma$ (Euler's constant) cancels out of~\eqref{eq-proj-H0}. That is, 
\[
\|P_k\|_{H^0} = \exp\left(\frac{H_{(k-2)/2}-H_{(n-2)/2}}{2}\right). 
\]
The property $H_\alpha = H_{\alpha-1} + \frac{1}{\alpha}$ together with $H_0=0$ and $H_{-1/2} = -2\log 2$ determine all the harmonic numbers needed for $\|P_k\|_{H^0}$. For example, in $n=3$ dimensions we have $\|P_1\|_{H^0}=1/e$ and $\|P_2\|_{H^0} = 2/e$. Also, 
\[
\frac{\|P_2\|_{H^0}}{\|P_1\|_{H^0}}
= \exp\left(\frac{H_0-H_{-1/2}}{2}\right) = 2
\]
in any dimension $n\ge 2$. 

Let us write $\kappa_n$ for the value $\|P_1\|_{H^0} = \frac{1}{2}\exp\left(-\frac12 H_{(n-2)/2}\right)$. This constant appears in a sharp inequality  between the $H^0$ norm and the operator norm $\|\cdot\|_\text{op}$. Namely, for any $A\in \mathbb R^{n\times n}$ we have
\begin{equation}\label{eq-op-comparison}
\kappa_n \|A\|_\text{op}  
\le \|A\|_{H^0} \le \|A\|_\text{op}. 
\end{equation}
Indeed, writing the singular values of $A$ as $\sigma_1\ge \dots \ge \sigma_n$ and recalling  
$\|A\|_{H^0} = \Phi(\sigma_1,\dots, \sigma_n)$ from the proof of Theorem~\ref{thm-h0c}, we obtain~\eqref{eq-op-comparison} from 
\[
\sigma_1 \|P_1\|_{H^0} =
\Phi(\sigma_1,0, \dots, 0) 
\le \Phi(\sigma_1,\sigma_2,\dots, \sigma_n) 
\le \Phi(\sigma_1,\sigma_1,\dots, \sigma_1)  
=\sigma_1 \|I\|_{H^0} = \sigma_1.
\]

A tighter comparison is available in terms of the trace norm 
$\|A\|_*=\sigma_1+\dots+\sigma_n$, namely 
\begin{equation}\label{eq-trace-comparison}
\frac{1}{n} \|A\|_*  
\le \|A\|_{H^0} \le \kappa_n   \|A\|_* 
\end{equation}
(note that $\kappa_2 = 1/2$ makes~\eqref{eq-trace-comparison} an identity when $n=2$). 
The inequality~\eqref{eq-trace-comparison} is equivalent to 
\begin{equation}\label{eq-trace-comparison-2}
\bar\sigma 
\le \Phi(\sigma_1,\sigma_2,\dots, \sigma_n) \le n\kappa_n \bar\sigma, \quad \text{where }
\bar\sigma = \frac{\sigma_1+\dots+\sigma_n}{n}.
\end{equation} 
To prove the first half of~\eqref{eq-trace-comparison-2}, note that averaging $\Phi$ over all permutations of $(\sigma_1,\dots,\sigma_n)$ leaves $\Phi(\sigma_1,\sigma_2,\dots, \sigma_n)$ unchanged, while the value of $\Phi$ at the average of these permuted vectors is $\Phi(\bar\sigma,\dots,\bar\sigma)=\bar\sigma$. The second half of~\eqref{eq-trace-comparison-2} follows by writing $(\sigma_1,\dots,\sigma_n)$ as the linear combination of standard basis vectors and using the subadditivity of $\Phi$.  

The constant $\kappa_n$ is also related to the question of submultiplicativity. Since $P_1$ is idempotent and $\|P_1\|_{H^0}=\kappa_n<1$, the $H^0$ norm is not submultiplicative. Instead, it has the property 
\[
\|AB\|_{H^0} \le \|A\|_{\text{op}} \|B\|_{H^0} 
\]
by virtue of being orthogonally invariant~\cite[Proposition IV.2.4]{Bhatia}. However, the rescaled norm
$A \mapsto  \kappa_n^{-1} \|A\|_{H^0}$ 
is submultiplicative (i.e., is a matrix norm) by~\cite[Theorem 7.4.10.1]{HornJohnson}. 


\section{The Hardy-type norms}

For $0<p<\infty$, the Hardy ($H^p$) mean of a real $n\times n$ matrix $A$ is defined by
\begin{equation}\label{Hp-def}
\|A\|_{H^p}
   =\left(\int_{S^{n-1}}|Ax|^p\,d\mu(x)\right)^{1/p}
   =\left(\EE_x|Ax|^p\right)^{1/p}.
\end{equation}
For $p\ge 1$ the quantity~\eqref{Hp-def} is obviously a norm, being the restriction of the $L^p$ norm to the subspace of $L^p(S^{n-1}; \mathbb R^n)$ formed by linear maps. For $0<p<1$ the space $L^p(S^{n-1}; \mathbb R^n)$ is no longer normed, but its subspace of linear maps still is. 

\begin{theorem}\label{thm-hp-norm}
For every $n\ge 2$ and $0<p<\infty$, the quantity defined by~\eqref{Hp-def} is an orthogonally invariant norm on $\mathbb R^{n\times n}$. 
\end{theorem}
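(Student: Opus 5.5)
Since $\|A\|_{H^p}$ is a norm for $p\ge 1$, only $0<p<1$ needs work, and I will follow the proof of Theorem~\ref{thm-h0c}. Orthogonal invariance is immediate, so the quantity depends only on the singular values. Set
\[
\Phi(a)=\bigl(\EE_x \sigma(A,x)^{p/2}\bigr)^{1/p},\qquad a\in\mathbb R^n_+,
\]
which is smooth and $1$-homogeneous. Once $\Phi$ is known to be convex on $\mathbb R^n_+$, the extension by absolute values, the monotonicity argument, and von Neumann's theorem go through verbatim. It therefore suffices, by Lemma~\ref{lem-mixed-partials}, to show that $\partial^2\Phi/\partial a_i\partial a_j\le 0$ for $i<j$.

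Write $F=\EE_x\sigma^{p/2}$, so that $\Phi=F^{1/p}$. Then
\[
\partial_i F = p\,a_i\,\EE_x x_i^2\sigma^{p/2-1},
\qquad
\partial_i\partial_j F = p(p-2)\,a_ia_j\,\EE_x x_i^2x_j^2\sigma^{p/2-2}.
\]
Differentiating $\Phi=F^{1/p}$ twice and cancelling the common positive factor $a_ia_j$, the required sign condition becomes
\begin{equation*}
(1-p)\,\EE_x\!\left[x_i^2\sigma^{p/2-1}\right]\EE_x\!\left[x_j^2\sigma^{p/2-1}\right]\le (2-p)\,\EE_x\!\left[\sigma^{p/2}\right]\,\EE_x\!\left[x_i^2x_j^2\sigma^{p/2-2}\right].
\end{equation*}
This is the analogue of~\eqref{eq-need-for-h0}. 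As $p\to 0$, after dividing by $\EE\sigma^{p/2}\to 1$, it reduces exactly to~\eqref{eq-need-for-h0}.

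To prove it, I take $i=n-1$, $j=n$ and use the slicing formula~\eqref{eq-moments2} together with the rewriting~\eqref{eq-rewrite-sigma} of $\sigma$. For fixed $y$, write $\alpha=a_{n-1}^2+g(y)$, $\beta=a_n^2+g(y)$ and $D(\theta)=\alpha\cos^2\theta+\beta\sin^2\theta$, so that $\sigma=r^2D(\theta)$. All three inner integrals then come from the single function
\[
M(\alpha,\beta)=\frac{1}{2\pi}\int_0^{2\pi}D(\theta)^{p/2}\,d\theta .
\]
Indeed, $\partial_\alpha M$ and $\partial_\beta M$ give the first-order terms, and $\partial_\alpha\partial_\beta M$ gives the mixed term, each with the explicit constants $p/2$ and $(p/2)(p/2-1)$. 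The inner integral of $\EE\sigma^{p/2}$ is $M$ itself, up to the factor $r^p$. The key planar fact I need is that $M$, which is $(p/2)$-homogeneous in $(\alpha,\beta)$, satisfies a pointwise two-dimensional version of the inequality: $(\alpha,\beta)\mapsto M(\alpha^2,\beta^2)^{1/p}$ is exactly the two-dimensional $H^p$ norm of $\mathrm{diag}(\alpha,\beta)$, and that norm is convex because its $p$-th power is an $L^p$-type average against a hypergeometric-type kernel. I would prove this two-dimensional convexity first, either directly or by writing $M$ as a hypergeometric function ${}_2F_1(-p/2,\tfrac12;1;1-\beta/\alpha)$. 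Lemma~\ref{lem-mixed-partials-2d} then converts it into pointwise mixed-partial inequalities for $M$. The remaining step is to integrate these over $y$ and combine them using Cauchy--Schwarz or Hölder, as was done in~\eqref{eq-moments6}.

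The main obstacle is this last combination. For $p=0$ the three slice integrals collapsed to functions of the single quantity $h(y)$, which made Cauchy--Schwarz exact. For general $p$ the slice quantities are genuinely different functions of $(\alpha,\beta)$, and the weight $\EE\sigma^{p/2}$ also varies with $y$. My first attempt would be to find a representation of the form
\[
u(y)=\text{slice of } x_{n-1}^2\sigma^{p/2-1},\qquad v(y)=\text{slice of } x_n^2\sigma^{p/2-1},\qquad w(y)=\text{slice of } \sigma^{p/2},
\]
for which the two-dimensional inequality reads $(1-p)\,uv\le(2-p)\,w\cdot(\text{mixed slice})$ pointwise in $y$. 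I would then apply Cauchy--Schwarz in the form
\[
\Bigl(\EE_y u\Bigr)\Bigl(\EE_y v\Bigr)\le \EE_y w\;\EE_y\frac{uv}{w},
\]
which holds provided $u/w$ and $v/w$ are oppositely ordered, i.e.\ by Chebyshev's inequality. I would verify this ordering by checking monotonicity in $g(y)$. If that fails, my fallback is the alternative route mentioned after Theorem~\ref{thm-h0c}: represent $|Ax|^p$ as a positive mixture over $t>0$ of $1-e^{-t|Ax|^2}$ and use Gaussian averaging. This changes the problem to showing that a concave-in-$\sigma$ average inherits convexity after taking the $1/p$ power.
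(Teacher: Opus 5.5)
Your skeleton is the paper's proof. You reduce via Lemma~\ref{lem-mixed-partials} to \eqref{eq-need-for-hp}, slice over $y\in B^{n-2}$, turn planar convexity into a pointwise inequality with Lemma~\ref{lem-mixed-partials-2d}, and finish with a weighted Chebyshev step with weight $w$. But the step you call the main obstacle is stated incorrectly.

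\textbf{The combination step.} The inequality $(\EE_y u)(\EE_y v)\le \EE_y w\,\EE_y(uv/w)$ is not a form of Cauchy--Schwarz; Cauchy--Schwarz only bounds $(\EE_y\sqrt{uv})^2$, which is at most your left side. Your Chebyshev condition is also reversed. With $U=u/w$, $V=v/w$ and $y'$ an independent copy of $y$,
\[
\EE_y w\;\EE_y(wUV)-\EE_y(wU)\,\EE_y(wV)=\tfrac12\,\EE\bigl[w(y)w(y')\,(U(y)-U(y'))(V(y)-V(y'))\bigr].
\]
So you need $U$ and $V$ \emph{similarly} ordered; oppositely ordered functions give the reverse inequality. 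The verification you left undone is short. The factors $r^{p}$ cancel, so $U=\EE_\theta(D^{p/2-1}\cos^2\theta)/\EE_\theta D^{p/2}$ and $V$ depend on $y$ only through $g$. Since $\partial D/\partial g=1$ and $0<p<1$, the numerators decrease in $g$ and the denominator increases. Hence $U$ and $V$ are both nonincreasing in $g$. Under your stated criterion this check would have signalled failure and sent you to the undeveloped, and unnecessary, Gaussian fallback. With the correct criterion it is exactly \eqref{eq-association} with $X=g(y)$ and $W=w$. Combining that with your pointwise planar inequality divided by $w$, $uv/w\le\frac{2-p}{1-p}\cdot(\text{mixed slice})$, and integrating in $y$ gives \eqref{eq-need-for-hp}.

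\textbf{The planar input.} Convexity of $(s,t)\mapsto M(s^2,t^2)^{1/p}$ for $0<p<1$ is precisely the case $n=2$ of the theorem, and it is not automatic. Means of the form $(\EE|f|^p)^{1/p}$ with $p<1$ do not preserve convexity in general. So the remark that ``its $p$-th power is an $L^p$-type average against a hypergeometric-type kernel'' establishes nothing. The paper does not prove this fact either; it imports it as \cite[Theorem 2.1]{KovalevYang}. You must cite that result or give a genuine proof. As written, the case $n=2$ is unproved, and the slicing argument for $n\ge 3$ rests on it.
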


\begin{proof} As noted above, only the case $0<p<1$ needs to be treated. Consider the case of a diagonal matrix $A$ with positive diagonal entries $a_1,\dots, a_n$. The function 
$\Phi(a):=\|A\|_{H^p}$ is evidently smooth and $1$-homogeneous on $\mathbb R^n_+$. In order to apply Lemma~\ref{lem-mixed-partials}, we need to show that the mixed second partials of $\Phi$ are nonpositive. 

Let $q=p/2$, $\sigma = \sigma(A, x) = \sum_{k=1}^n a_k^2 x_k^2$ and $\varphi(a) =  \Phi(a)^{p} = \EE_x \sigma^q$. 
The relevant partial derivatives of $\varphi$ are
\[
\frac{\partial \varphi}{\partial a_i}
= p a_i\, \EE_x \left(x_i^2 \sigma^{q-1}\right) \quad  \text{and} \quad
\frac{\partial^2 \varphi}{\partial a_i\partial a_j}
= p(p-2) a_ia_j\, \EE_x \left(x_i^2 x_j^2 \sigma^{q-2}\right)  \quad (i<j).
\]
The Hessian matrix of $\Phi$ is
\[D^2 (\varphi^{1/p}) = \varphi^{1/p-2} \left(\frac{\varphi}{p}\,  D^2\varphi + \frac{1-p}{p^2}\, \nabla \varphi (\nabla \varphi)^T\right).\] 
In order to apply Lemma~\ref{lem-mixed-partials} we need to show $\partial^2 \Phi/\partial a_i \partial a_j\le 0$, which amounts to 
\begin{equation}\label{eq-need-for-hp}
\EE_x \left(x_i^2 \sigma^{q-1}\right) \, \EE_x \left(x_j^2 \sigma^{q-1}\right) \le \frac{2-p}{1-p}\, 
\EE_x \left(\sigma^q\right)
\EE_x \left(x_i^2 x_j^2 \sigma^{q-2}\right) ,\quad i<j.   
\end{equation} 
Once~\eqref{eq-need-for-hp} is proved, Theorem~\ref{thm-hp-norm} follows in the same way as Theorem~\ref{thm-h0c}. We thus proceed to the proof of~\eqref{eq-need-for-hp}. By relabeling the variables, we can take $i=n-1$ and $j=n$. 
In place of Lemma~\ref{lemma-trig-integral} the proof involves a nonelementary integral representing the $2\times 2$ case of the $H^p$ norm. Specifically, Theorem~2.1 in~\cite{KovalevYang} shows that the function 
\[
F(s,t) := \left(\frac1{2\pi}\int_0^{2\pi}
\bigl(s^2\cos^2\theta+t^2\sin^2\theta\bigr)^{p/2}
 \,d\theta \right)^{1/p}
\]
is convex with respect to $(s,t) \in \mathbb R^2$. Since $F$ is $1$-homogeneous, by Lemma~\ref{lem-mixed-partials-2d} we have $\partial^2 F/\partial s\partial t \le 0$ for $s,t>0$. Introduce the notation $\EE_\theta f= \frac{1}{2\pi}\int_0^{2\pi} f \,d\theta$ and $T = s^2\cos^2\theta+t^2\sin^2\theta$, so that $F = \left(\EE_\theta T^q \right)^{1/p}$. Writing out the inequality $\partial^2 F/\partial s\partial t \le 0$, we obtain 
\begin{equation}\label{eq-from-mixed-partial}
\EE_\theta (T^{q-1} \cos^2\theta )\,
\EE_\theta (T^{q-1} \sin^2\theta )
\le \frac{2-p}{1-p}\,
\EE_\theta (T^{q})
\, \EE_\theta (T^{q-2} \cos^2\theta \sin^2\theta )
\end{equation}
which strongly resembles~\eqref{eq-need-for-hp} and indeed will lead us there.
 
As in the proof of~\eqref{eq-need-for-h0}, we use polar coordinates to express the average of a function $f\in L^1(S^{n-1})$ as 
\begin{equation}\label{eq-wm-fiber}
\EE_x f=\EE_y\left(\EE_\theta f(y_1,\dots,y_{n-2},r\cos\theta,r\sin\theta)\right)
\end{equation}
where $y$ is uniformly distributed on $B^{n-2}$ and $r=\sqrt{1-|y|^2}$. Write $g=r^{-2}\sum_{k=1}^{n-2}a_k^2 y_k^2$ and plug 
\begin{equation}\label{eq-sigma-from-g}
T = (a_{n-1}^2+g)\cos^2\theta+(a_n^2+g)\sin^2\theta
\end{equation}
into~\eqref{eq-from-mixed-partial}, expressing the resulting inequality as 
\begin{equation}\label{eq-theta-only}
\frac{\EE_\theta\, (T^{q-1} \cos^2\theta)}{\EE_\theta T^{q}}\, 
\frac{\EE_\theta\, (T^{q-1} \sin^2\theta)}{\EE_\theta T^{q}}\
\le \frac{2-p}{1-p}\,
\frac{\EE_\theta \,(T^{q-2} \cos^2\theta \sin^2\theta )}{\EE_\theta T^{q}}. 
\end{equation}
Each quotient on the left in~\eqref{eq-theta-only} depends on $y$ only through the scalar $g=g(y)$ appearing in $T$. Since $dT/dg = 1$ and $0<q<1$, each numerator in ~\eqref{eq-theta-only} is decreasing with respect to $g$ and each denominator is increasing. Thus we can write 
\begin{equation*} \begin{split}
u(g) &= \frac{\EE_\theta\, (T^{q-1} \cos^2\theta)}{\EE_\theta T^{q}} = \frac{\EE_\theta\, (x_{n-1}^2 \sigma^{q-1} )}{\EE_\theta \sigma^{q}} \\ 
v(g) &= \frac{\EE_\theta\, (T^{q-1} \sin^2\theta)}{\EE_\theta T^{q}} 
= \frac{\EE_\theta\, (x_{n}^2 \sigma^{q-1} )}{\EE_\theta \sigma^{q}}
\end{split}\end{equation*}
where $u$ and $v$ are some nonincreasing real functions. The weighted form of Chebyshev's association inequality~\cite[Theorem 2.14]{BLM} states that 
\begin{equation}\label{eq-association}
\EE[Wu(X)]\,\EE[Wv(X)]   \le \EE[W]\,\EE[Wu(X)v(X)] 
\end{equation}
for any random variables $X$ and $W$, as long as $W\ge 0$ and the moments exist. 
Using~\eqref{eq-association} with $X=g(y)$ and $W=\EE_\theta \sigma^q$, and recalling that $\EE_y \EE_\theta(\cdot)=\EE_x(\cdot)$, we obtain
\begin{equation}\label{eq-from-cheb}
\EE_x (x_{n-1}^2 \sigma^{q-1}) 
\EE_x (x_{n}^2 \sigma^{q-1}) \le 
\left(\EE_x \sigma^q \right)
\EE_y \left(\frac{\EE_\theta\, (x_{n-1}^2 \sigma^{q-1}) \EE_\theta\, (x_{n}^2 \sigma^{q-1})}{\EE_\theta \sigma^q}\right). 
\end{equation} 
Recasting~\eqref{eq-theta-only} in the form
\[
\frac{\EE_\theta\, (x_{n-1}^2 \sigma^{q-1}) \EE_\theta\, (x_{n}^2 \sigma^{q-1})}{\EE_\theta \sigma^q} \le 
\frac{2-p}{1-p}\,\EE_\theta \,(x_{n-1}^2 x_n^2 \sigma^{q-2}) 
\]
and plugging it into~\eqref{eq-from-cheb} yields 
\begin{equation*}
\EE_x (x_{n-1}^2 \sigma^{q-1}) 
\EE_x (x_{n}^2 \sigma^{q-1}) 
\le \frac{2-p}{1-p}\, \EE_x (\sigma^q) \, 
\EE_x \,(x_{n-1}^2 x_n^2 \sigma^{q-2}).
\end{equation*}
This proves~\eqref{eq-need-for-hp}, and thus completes the proof of Theorem~\ref{thm-hp-norm}.
\end{proof}

\section{Toward near-isometric duality for Hardy norms}\label{sec:duality}

Define the inner product of matrices $A, B$ by $\langle A, B\rangle = \EE_x \langle Ax, Bx\rangle$ where $x$ is uniformly distributed on $S^{n-1}$. Another way to express it is $\langle A, B\rangle = \frac{1}{n} \tr\left(A^{\mathsf T}B\right)$. This inner product gives rise to dual Hardy norms on $\mathbb R^{n\times n}$:
\begin{equation}
    \|A\|_{H^p_*} = \sup\{\langle A, B\rangle \colon \|B\|_{H^p} \le 1\}. 
\end{equation}
Of these, the $H^1_*$ norm is of particular interest, as it appears in the Schwarz lemma for harmonic mappings~\cite[Theorem 5.1]{KovalevYang}. Improving on the results of~\cite{KovalevYang}, Brevig, Ortega-Cerd\`{a} and Seip~\cite{Brevig-etal} showed that for $2\times 2$ matrices,
\begin{equation}\label{eq-2d-dual}
\|A\|_{H^4} \le \|A\|_{H^1_*} \le C_2 \|A\|_{H^4} \quad 
\text{with } C_2 = \frac{\pi}{2\sqrt[4]{6}}
\end{equation} 
where the constant $C_2 = 1.0036\ldots$ is the best possible one. 

Numerical experiments indicate that such near-isometric duality of $H^1$ and $H^4$ persists for $n>2$, except that the multiplicative constant should be placed in the lower bound. The value of this constant appears to be determined by the case $A=P_2$, a rank-two projection. 

\begin{conjecture}\label{conj-1-4} For any $n\times n$ matrix $A$ with $n>2$ we have 
\begin{equation}\label{eq-conjectural-dual}
c_n \|A\|_{H^4} \le \|A\|_{H^1_*} \le \|A\|_{H^4} \quad 
\text{where } c_n = \frac{\langle P_2, P_2\rangle }{\|P_2\|_{H^1} \|P_2\|_{H^4}}.
\end{equation} 
\end{conjecture}

We can find the explicit form of $c_n$ in~\eqref{eq-conjectural-dual}. Indeed, $\langle P_2, P_2\rangle = 2/n$ and the Hardy norms of projections can be computed as follows. 

\begin{proposition}\label{prop-projection-p-norm}
Let $1\le k\le n-1$, and let $P_k$ be an orthogonal projection of rank $k$ in $\mathbb R^n$. Then
\begin{equation}\label{eq-proj-p-norm}
\|P_k\|_{H^p}
= \left(\frac{\Gamma((k+p)/2)\, \Gamma(n/2)}{\Gamma(k/2)\,\Gamma((n+p)/2)}\right)^{1/p}.
\end{equation}
\end{proposition}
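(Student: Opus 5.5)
The plan is to follow the proof of Proposition~\ref{prop-projection-H0}, replacing the logarithmic moment by the $(p/2)$-th moment of the same Beta distribution. By orthogonal invariance of~\eqref{Hp-def}, we may assume that $P_k$ is the projection onto the first $k$ coordinates. Let $g$ be a standard Gaussian vector in $\mathbb R^n$ and set $x=g/|g|$, which is uniformly distributed on $S^{n-1}$. Then
\[
|P_k x|^2 = \frac{G_1}{G_1+G_2},
\]
where $G_1$ and $G_2$ are independent $\chi^2$ variables with $k$ and $n-k$ degrees of freedom. As recalled in the proof of Proposition~\ref{prop-projection-H0}, this ratio has the distribution $\BBeta(k/2,(n-k)/2)$.

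Consequently $\|P_k\|_{H^p}^p = \EE_x |P_kx|^p = \EE\, Z^{p/2}$, where $Z\sim \BBeta(a,b)$ with $a=k/2$ and $b=(n-k)/2$. The moment of order $s=p/2>0$ is
\[
\EE Z^{s} = \frac{B(a+s,b)}{B(a,b)} = \frac{\Gamma(a+s)\,\Gamma(a+b)}{\Gamma(a)\,\Gamma(a+b+s)}.
\]
This is seen by writing out the Beta density and recognizing the integral of $z^{a+s-1}(1-z)^{b-1}$ as the Beta function $B(a+s,b)$. Substituting $a+s=(k+p)/2$, $a+b=n/2$ and $a+b+s=(n+p)/2$, and then taking the $p$-th root, gives~\eqref{eq-proj-p-norm}.

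The only step needing any care is the distributional identification. It was already used, with a citation, in the proof of Proposition~\ref{prop-projection-H0}, so it can be quoted directly. For a self-contained argument, one can compute $\EE\, G_1^s$ and $\EE(G_1+G_2)^s$ from the $\chi^2$ densities. One then uses the standard fact that $G_1/(G_1+G_2)$ is independent of $G_1+G_2$, which gives
\[
\EE\, G_1^s = \EE Z^s \cdot \EE(G_1+G_2)^s .
\]
Here $\EE\, G_1^s = 2^s\Gamma(k/2+s)/\Gamma(k/2)$ and $\EE(G_1+G_2)^s = 2^s\Gamma(n/2+s)/\Gamma(n/2)$, and dividing recovers the same formula. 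All moments involved are finite because $s>0$.

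As a sanity check, letting $p\to0$ gives $\frac1p\log\|P_k\|_{H^p}^p\to \frac12(\psi(k/2)-\psi(n/2))$, in agreement with~\eqref{eq-proj-H0}.
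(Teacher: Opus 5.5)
Your proposal is correct and follows the paper's proof: identify $|P_kx|^2$ as $\BBeta(k/2,(n-k)/2)$ and take its $(p/2)$-th moment $B((k+p)/2,(n-k)/2)/B(k/2,(n-k)/2)$, then the $p$-th root. The self-contained $\chi^2$-moment derivation and the $p\to0$ consistency check with Proposition~\ref{prop-projection-H0} are correct optional additions.
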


\begin{proof} Recall from the proof of Proposition~\ref{prop-projection-H0} that $|P_k x|^2$ is distributed as $\BBeta(k/2, (n-k)/2)$. The $(p/2)$-th moment of this Beta distribution is~\cite[\S25.3]{JKB2} 
\[
\frac{B((k+p)/2, (n-k)/2)}{B(k/2, (n-k)/2)} =
\frac{\Gamma((k+p)/2) \,\Gamma(n/2)}{\Gamma(k/2)\,\Gamma((n+p)/2)},
\]
which implies~\eqref{eq-proj-p-norm}.
\end{proof}

The formula~\eqref{eq-proj-p-norm} simplifies the most for projections of rank $k=n-2$, when the Gamma functional equation yields
\[
\|P_{n-2}\|_{H^p}
= \left(\frac{n-2}{n-2+p}\right)^{1/p}.
\]
For projections of rank $k=2$ the simplification is also substantial: 
\begin{equation*}\begin{split}
\|P_2\|_{H^1} & = 
\frac{\Gamma(3/2)\, \Gamma(n/2)}{\Gamma((n+1)/2)}
= \frac{\sqrt{\pi}\,\Gamma(n/2)}{2\,\Gamma((n+1)/2)}; \\
\|P_2\|_{H^4} & = \left(\frac{\Gamma(3)\, \Gamma(n/2)}{\Gamma((n+4)/2)}\right)^{1/4} = 
 \left( \frac{8}{n(n+2)} \right)^{1/4}.   
\end{split} \end{equation*}
It follows that the conjectural constant $c_n$ in~\eqref{eq-conjectural-dual} is 
\begin{equation}\label{eq-conj-cn}
c_n = \frac{2^{5/4}}{\sqrt{\pi}}\, \frac{\Gamma((n+1)/2)}{\Gamma(n/2)}\,\frac{(n+2)^{1/4}}{n^{3/4}}.
\end{equation}
For example, $c_3=0.993\ldots$, and for large $n$ we have
$c_n \to 2^{3/4}/\sqrt{\pi} = 0.949\ldots$ since the Gamma ratio in~\eqref{eq-conj-cn} is asymptotic to $\sqrt{n/2}$. Thus, the constant in Conjecture~\ref{conj-1-4} stays close to $1$ in all dimensions $n>2$. 

At the time of writing, even the  (presumably easier) upper bound in~\eqref{eq-conjectural-dual}, that is, 
\begin{equation}\label{eq-conj-upper}
\frac{1}{n}\tr\left(A^{\mathsf T}B\right) 
\le \|A\|_4 \|B\|_1 \quad A, B\in \mathbb R^{n\times n}, \ n>2,
\end{equation}
remains open. The fact that~\eqref{eq-conj-upper} fails for $n=2$ (e.g., $\|P_1\|_4 \|P_1\|_1 = (3/8)^{1/4} (2/\pi)=0.498\ldots < 1/2$) indicates that the inequality is not trivial. 

\bibliographystyle{plain}
\bibliography{references}

\end{document}